\newtheorem{theorem}{Theorem}[section]
\newtheorem{lemma}[theorem]{Lemma}
\newtheorem{proposition}[theorem]{Proposition}
\theoremstyle{definition}
\newtheorem{definition}[theorem]{Definition}
\newtheorem{question}[theorem]{Question}
\theoremstyle{remark}
\newtheorem{remark}[theorem]{Remark}
\numberwithin{equation}{section}
\newcommand{\Z}{\mathbb{Z}}
\newcommand{\N}{\mathbb{N}}
\newcommand{\qa}{q_{\mathrm{accept}}}
\newcommand{\qr}{q_{\mathrm{reject}}}
\begin{document}

\title[Undecidability of Finite Orbit Recognition in Polynomial Maps]{Undecidability of Finite Orbit Recognition in Polynomial Maps}

\author{Gwangyong Gwon}
\address{Department of Mathematical Science, Seoul National University, Seoul, Korea}
\email{bluelily@snu.ac.kr}

\begin{abstract}
We prove that orbit finiteness is undecidable even for a restricted class of finitely presented local polynomial maps on $\Z^\N$.
The proof gives a direct encoding of deterministic Turing machine dynamics into polynomial dynamics.%논문의 핵심 요약. AMS 스타일은 abstract가 title 이후, 본문 이전에 위치.
\end{abstract}

\maketitle
\section{Introduction}%서론. 문제 제기, 배경, 관련 연구.
\subsection{Main results}
This paper concerns several undecidability results in computability and dynamics.
Suppose that \(S\) is a finite collection of finitely presented polynomial maps on \(\Z^\N\).
We shall say that a point \(x=(x_i)\in\Z^\N\) is \(S\textrm{-}stable\) if its \(S\textrm{-}\)orbit
\[\mathcal{O}_S(x)=\{F_k\circ\cdots\circ F_1(x):k\ge0,\ F_i\in S\}\]
is a finite set.
For example, if \(S\) consists of a single map $F$, then a point $x\in\Z^\N$ is $S$-stable if and only if $F^k(x)=F^l(x)$ for some $k>l\ge0$.
Also, we  say that a trajectory of a Turing machine on a given input is \textit{eventually periodic} if the machine eventually returns to a previously visited configuration.
To induce our main result, we begin with an analogy of the Halting Problem \cite{turing1936}.

\begin{restatable}{theorem}{TEP}\label{und_tep}
There is no algorithm which, given a deterministic one-tape Turing machine $M$ and a finite input word $w$, decides whether the trajectory of $M$ on $w$ is eventually periodic.
\end{restatable}

\Cref{und_tep} is, in fact, an instance of a general undecidability result;
any nontrivial limit property of Turing machines cannot be determined by an algorithm, such as ``Does a Turing machine yield a trajectory which contains a given configuration?" or ``Does a Turing machine write a certain symbol infinitely often?"\\
Meanwhile, this implies, through reduction, our main theorem.

\begin{restatable}[Orbit-finiteness theorem]{theorem}{FO}\label{und_fo}
There is no algorithm which, given a finite set \(S\) of finitely presented local polynomial maps on \(\Z^\N\) and a finitely supported point \(x\in\Z^\N\), decides whether \(x\) is \(S\)-stable.
This remains true even when \(S=\{F\}\) is a singleton.
\end{restatable}

Our proof combines tools from computability and algebra, illustrating that orbit structure questions in polynomial maps are algorithmically intractable in general.

\subsection{Historical Backgrounds}
For a set $X$ and a collection $S$ of endomorphisms of $X$, we say a point $x\in X$ is $S$-\textit{periodic} if its $S$-orbit $\mathcal{O}_S(x)=\{f(x):f\in\langle S\rangle\}$ is a finite set and the endomorphisms in $S$ act on $\mathcal{O}_S(x)$ by permutations.
Whang \cite{whang2023} proved a decidability result about periodic orbits.
\begin{theorem}[Whang, \cite{whang2023}]
Fix $n\ge1$.
There is an effective universal constant $C(n)$ such that, for an arbitrary set $S$ of polynomial maps from $\Z^n$ to itself with integer coefficients, and any $S$-periodic point $x\in\Z^n$, we have
\begin{equation*}
|\mathcal{O}_S(x)|\le C(n).
\end{equation*}
In particular, there is a polynomial-time algorithm to decide, for $x\in\Z^n$ and a finite set $S$ of polynomial endomorphisms of $\Z^n$ over $\Z$, whether or not $x$ is $S$-periodic.
\end{theorem}
More generally, he established the following version.
\begin{theorem}[Whang, \cite{whang2023}]
Let $S$ be a finite set of endomorphisms of an algebraic variety $V/\bar{\mathbb{Q}}$.
There is an algorithm to decide, given $x\in V(\bar{\mathbb{Q}})$, whether or not $x$ is $S$-periodic.
\end{theorem}
Meanwhile, Whang posed a question about finite orbits.
\begin{question}[Whang, \cite{whang2023}]
Given a finite set $S$ of endomorphisms of an algebraic variety $V/\bar{\mathbb{Q}}$, is the subset of points with finite $S$-orbit decidable?
\end{question}
We shall approach this question within a different space $\Z^\N$, and conclude that the answer is negative.
In contrast, Kim \cite{kim2025} proved that for an algebraic variety $V/\bar{\mathbb{Q}}$ there is an algorithm which decides a given point $x\in V(\bar{\mathbb{Q}})$ and a finite collection of unramified endomorphisms yield a finite orbit.

\subsection{Acknowledgements}
I thank Junho Peter Whang for beneficial advice and comments.

\section{Finite input format}

\subsection{Uniform coordinate coding}
We fix here the coordinate convention once and for all.
\begin{definition}[Uniform block coding]
For each integer $r\ge1$, fix a computable bijection
\[
\iota_r:\Z\times\{1,\cdots,r\}\to\N
\]
such that both maps
\[
(r,i,j)\mapsto\iota_r(i,j),\quad(r,n)\mapsto\iota_r^{-1}(n)
\]
are computable.
We interpret $i\in\Z$ as a tape-cell coordinate and $j\in\{1,\cdots,r\}$ as a coordinate inside the cell block.
\end{definition}
This removes the ambiguity of unused coordinates.
Every coordinate of $\Z^\N$ is covered exactly once by the coding $\iota_r$.

\subsection{Finitely presented local polynomial maps}
\begin{definition}[Finitely presented local polynomial map]
A finitely presented local polynomial map on $\Z^\N$ is specified by finite data
\[(r,R,P_1,\cdots,P_r),\]
where $r\ge1,\ R\ge0,$ and
\[P_j\in\Z[Y_{u,h}:-R\le u\le R,1\le h\le r]\qquad(1\le j\le r).\]
It defines a map $F:\Z^\N\to\Z^\N$ by
\[
F_{\iota_r(i,j)}(x)=P_j(x_{\iota_r(i+u,h)}:-R\le u\le R,1\le h\le r)
\]
for every $i\in\Z$ and $1\le j\le r$.
\end{definition}
Each polynomial $P_j$ is encoded as a finite list of monomials.
A monomial is given by its integer coefficient and its exponent vector.
Hence the tuple $(r,R,P_1,\cdots,P_r)$ is a finite string.

\begin{remark}
The general input class in \Cref{und_fo} does not require maps in $S$ to preserve finite support.
For example, a nonzero constant term may produce an infinite-support point from the zero point.
This is not a problem: orbit finiteness is still a well-defined equality question in $\Z^\N$.
The reduction below uses a special zero-preserving subclass for which finite support is preserved.
\end{remark}

\section{Deterministic Turing machines}

\subsection{Turing machines}\label{sec:tm}
We follow \cite{Sipser2012} for basic definitions.
Informally, a \emph{Turing machine} consists of a two-sided tape of infinite length, a read--write head, and a finite table of instructions that determine how the head moves, reads, and writes symbols on the tape.
The machine operates in discrete time steps, at each step updating its state, writing a symbol, and moving the head one cell left or right.

\begin{definition}
	A deterministic Turing machine is a 7-tuple
	\[M=(Q, \Sigma, \Gamma, \delta, q_0, \qa, \qr),\]
	where $Q, \Sigma, \Gamma$ are all finite sets and
	\begin{itemize}
		\item $Q$ is the set of states,
		\item $\Sigma$ is the input alphabet not containing the blank symbol $\sqcup$,
		\item $\Gamma$ is the tape alphabet, where $\sqcup \in \Gamma$ and $\Sigma\subseteq\Gamma$,
		\item $\delta: (Q\setminus\{\qa,\qr\})\times\Gamma \to Q \times \Gamma \times \{\mathrm{L},\mathrm{R}\}$ is the transition function,
		\item $q_0 \in Q$ is the start state,
		\item $\qa, \qr \in Q$ are the halting states, with $\qa \neq \qr$.
	\end{itemize}
\end{definition}
The machine receives its input word $w$ over $\Sigma$, written from the \(0\)th tape cell to the right, with all other cells containing the blank symbol $\sqcup$.
The head begins on the leftmost square of the tape at the state $q_0$.
Computation proceeds according to the transition function $\delta$.
The machine halts when it enters either $\qa$ or $\qr$; otherwise it continues indefinitely.

\begin{definition}\label{def_ct}
A configuration consists of a tape function $\tau:\Z\to\Gamma$ with finite nonblank support, a head position $h\in\Z$, and a state $q\in Q$.
The associated trajectory map $T_M$ follows $\delta$ on nonhalting configurations and fixes halting configurations:
\[q\in\{\qa,\qr\}\to T_M(C)=C.\]
Thus halting trajectories are eventually fixed.
\end{definition}
\begin{definition}
The trajectory of $M$ from an initial configuration $C_0$ is eventually periodic if there exist integers $k<l$ such that
\[T^k_M(C_0)=T^l_M(C_0).\]
Since $T_M$ is deterministic, this is equivalent to periodicity of the tail after time $k$.
\end{definition}
\Cref{und_tep} concerns the trajectory of configurations of a Turing machine under successive applications of the transition function.

\subsection{Undecidability of Turing trajectory eventual periodicity}\label{sec:uttep}
We now introduce the classical \emph{Halting Problem}, following \cite{Sipser2012} and \cite{hamkins2024turing}.
Let
\[
\mathrm{HALT}=\{ \langle M, w \rangle : M \text{ is a Turing machine that halts on input } w \}.
\]

\begin{theorem}[Undecidability of the Halting Problem]
	The set $\mathrm{HALT}$ is undecidable; that is, no Turing machine can decide $\mathrm{HALT}$.
\end{theorem}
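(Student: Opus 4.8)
The plan is to establish undecidability by the classical diagonalization argument, the same technique that will later drive the proof of \Cref{und_wander}. First I would suppose, toward a contradiction, that $\mathrm{HALT}$ is decidable, so that there exists a Turing machine $H$ which halts on every input and, on $\langle M, w\rangle$, accepts precisely when $M$ halts on $w$ and rejects otherwise. The aim is to manufacture a machine whose behavior on its own description is self-contradictory.

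The key construction is an auxiliary machine $D$ that turns $H$ against itself. On input $\langle M\rangle$, the machine $D$ first forms the pair $\langle M, \langle M\rangle\rangle$ and simulates $H$ on it, in effect asking whether $M$ halts when run on its own encoding. Since $H$ is assumed to be a decider, this simulation always terminates. I would then have $D$ invert the verdict: if $H$ accepts (so $M$ halts on $\langle M\rangle$), then $D$ deliberately enters an infinite loop; if $H$ rejects (so $M$ does not halt on $\langle M\rangle$), then $D$ halts at once by passing to $\qa$.

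The contradiction surfaces when $D$ is run on its own description $\langle D\rangle$. If $D$ halts on $\langle D\rangle$, then by construction $H$ must have rejected $\langle D, \langle D\rangle\rangle$, meaning $D$ does \emph{not} halt on $\langle D\rangle$; if instead $D$ fails to halt on $\langle D\rangle$, then $H$ accepted, forcing $D$ to halt. Both horns are impossible, so the assumed decider $H$ cannot exist and $\mathrm{HALT}$ is undecidable.

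The argument presents no genuine obstacle, and the only point demanding care is the self-referential step of feeding $D$ its own encoding $\langle D\rangle$. This is legitimate because, once $D$ is fixed, $\langle D\rangle$ is merely a particular finite string and hence an admissible input; building $D$ requires only the routine computable step of duplicating and reformatting $\langle M\rangle$ into $\langle M, \langle M\rangle\rangle$ before invoking $H$. This same diagonal maneuver, suitably adapted, is what I would deploy to settle eventual periodicity in \Cref{und_wander}.
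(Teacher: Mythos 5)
Your proposal is correct: it is the standard Strachey-style self-referential diagonalization, which is precisely the argument the paper points to for this classical theorem (the paper cites it rather than reproving it, and explicitly imitates this very argument in its proof of \Cref{und_wander}). Your construction of $D$ and the two-case contradiction on input $\langle D\rangle$ are handled properly, including the essential point that $H$, being assumed a decider, always halts, so $D$'s behavior is well defined in both branches.
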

We can obtain \Cref{und_tep} by a direct reduction of the Halting Problem.
\begin{proof}[Proof of \Cref{und_tep}]
Given $(M,w)$, effectively construct a deterministic one-tape Turing machine $N_{M,w}$ that starts on the blank tape and does the following.

Here, the work and clock tracks are understood via the standard multi-track encoding for one-tape Turing machines.
\begin{enumerate}[label=(\alph*)]
\item It writes the finite word $w$ on a work track.
\item It simulates $M(w)$ step by step.
\item After each completed simulated step, it appends one fresh marker to a clock track.
\item If the simulated machine $M(w)$ halts, then $N_{M,w}$ enters a halting state, which is absorbing under the convention of \Cref{def_ct}.
\end{enumerate}

If $M(w)$ halts, then $N_{M,w}$ eventually enters a halting configuration, and hence its trajectory is eventually fixed.

Suppose $M(w)$ does not halt.
Then $N_{M,w}$ completes infinitely many simulated steps and appends infinitely many simulated steps and appends infinitely many clock markers.
Let $c(C_t)$ denote the number of the fresh markers in the configuration of $N_{M,w}$ at time $t$.
Then $c(C_t)$ is unbounded.

If the trajectory were eventually periodic, then there would exist $k<l$ with $C_k=C_l$.
Determinism would imply
\[C_{k+n}=C_{l+n}\qquad \textrm{for all}\ n\ge0.\]
Hence the tail would be periodic with period $l-k$, and the set of values of $c(C_t)$ along the tail would be finite and therefore bounded.
This contradicts the unbounded growth of the number of the fresh markers.
Thus, if $M(w)$ does not halt, the trajectory of $N_{M,w}$ is not eventually periodic.

We have effectively constructed $N_{M,w}$ such that
\[M(w)\ \text{halts}\quad\longleftrightarrow\quad N_{M,w}\ \text{has an eventually periodic trajectory.}\]
An algorithm for Turing trajectory eventual periodicity would therefore decide the Halting Problem, a contradiction.
\end{proof}

\section{Direct polynomial encoding of Turing machines}

\subsection{Alphabet encoding}
Let $N$ be a deterministic one-tape Turing machine.
Let $A_N$ be the finite set of cell symbols used to encode configurations:
\[A_N=\Gamma\cup(Q\times\Gamma).\]
The symbol $\sqcup\in\Gamma$ is the blank cell with no head.
Enumerate
\[A_N\setminus\{\sqcup\}=\{a_1,\cdots,a_m\}.\]
Define the one-hot encoding
\[v(\sqcup)=0\in\Z^m,\qquad v(a_j)=e_j\in\Z^m\quad(1\le j\le m).\]
A Turing configuration $C$ is encoded as $E(C)\in\Z^\N$ by
\[E(C)_{\iota_m(i,j)}=v(\alpha_i)_j,\]
where $\alpha_i\in A_N$ is the encoded cell symbol at tape position $i$.
Since the tape has finite nonblank support and exactly one head, $E(C)$ is finitely supported.
\begin{lemma}[Injectivity]\label{lem_inj}
The encoding $E$ is injective on valid Turing configurations.
\end{lemma}
\begin{proof}
The block at cell $i$ determines the symbol $a_i\in A_N$, because $v$ is injective on $A_N$.
Therefore all tape symbols, the unique head position, and the state are recovered from $E(C)$.
\end{proof}

\subsection{Radius-one local rule}
The Turing transition induces a radius-one local rule
\[\rho_N:A_N^3\to A_N.\]
For valid configurations, $\rho_N$ gives exactly the next cell symbol after one step of $N$.
On invalid triples, define $\rho_N$ arbitrarily; the reduction starts only from valid configurations.

The radius-one dependence is standard: the next symbol at a cell depends only on whether the head is currently at that cell or at one of its two neighbors.

\subsection{Indicator polynomials}
For $a\in A_N$, define a polynomial $\chi_a\in\Z[Y_1,\cdots,Y_m]$ by
\[\chi_{\sqcup}(Y_1,\cdots,Y_m)=\prod^m_{s=1}(1-Y_s),\]
and, for $a_j\neq\sqcup,$
\[\chi_{a_j}(Y_1,\cdots,Y_m)=Y_j\prod_{s\neq j}(1-Y_s).\]
On the set $\{0,e_1,\cdots,e_m\}$, these polynomials satisfy
\[\chi_a(v(b))=
\begin{cases}
1, & a=b, \\
0, & a\ne b.
\end{cases}\]

Let $Y^-,Y^0,Y^+$ denote three blocks of $m$ variables.
For $1\le j\le m$, define
\[P_{N,j}(Y^-,Y^0,Y^+)=\sum_{(a_-,a_0,a_+)\in A^3_N}v(\rho_N(a_-,a_0,a_+))_j\chi_{a-}(Y^-)\chi_{a_0}(Y^0)\chi_{a_+}(Y^+).\]
The polynomials have integer coefficients and are effectively computable from the finite transition table of $N$.

The global map $F_N:\Z^\N\to\Z^\N$ is the finitely presented local polynomial map with parameters $r=m,\ R=1,$ and local polynomials $P_{N_1,\cdots,P_m}$.

\subsection{One-hot invariance and simulation correctness}
\begin{lemma}[One-hot local correctness]\label{lem_oisc}
For every triple $(a_-,a_0,a_+)\in A^3_N$,
\[(P_{N,1},\cdots,P_{N,m})(v(a_-),v(a_0),v(a_+))=v(\rho_N(a_-,a_0,a_+)).\]
In particular, if all cell blocks of $x$ lie in $\{0,e_1,\cdots,e_m\}$, then all cell blocks of $F_N(x)$ also lie in $\{0,e_1,\cdots,e_m\}$, provided the block pattern is obtained from applying $\rho_N$ cellwise.
\end{lemma}

\begin{proof}
For each encoded input triple, exactly one product of indicator polynomials is equal to 1, namely the product corresponding to the actual triple $(a_-,a_0,a_+)$.
All other products vanish.
The output is therefore exactly the one-hot encoding of $\rho_N(a_-,a_0,a_+)$.
\end{proof}

\begin{lemma}[Simulation lemma]\label{lem_sim}
For every valid Turing configuration $C$,
\[F_N(E(C))=E(T_N(C)).\]
Consequently, for every $t\ge0$,
\[F^t_N(E(C_0))=E(T^t_N(C_0)).\]
\end{lemma}

\begin{proof}
The coordinate block of $F_N(E(C))$ at cell $i$ is obtained by applying the polynomial local rule to the three encoded blocks at cells $i-1,i,i+1$.
By \Cref{lem_oisc}, this is the one-hot encoding of the local Turing update $\rho_N$.
Since $\rho_N$ agrees with the Turing transition on valid configurations, the resulting global configuration is exactly $E(T_N(C))$.
The iterated identity follows by induction.
\end{proof}

\section{Final reduction}
\begin{proposition}\label{prop_ofrt}
Let $N$ be a deterministic one-tape Turing machine and let $C_0$ be a valid finite-support initial configuration.
Then
\[\mathcal{O}_{F_N}(E(C_0))\text{ is finite}\longleftrightarrow\{T^t_N(C_0):t\ge0\}\text{ is finite.}\]
Equivalently, the polynomial orbit is finite if and only if the Turing trajectory is eventually periodic.
\end{proposition}

\begin{proof}
By \Cref{lem_sim}, every point in the polynomial orbit is the encoding of the corresponding Turing configuration.
By \Cref{lem_inj}, equality of encoded points is equivalent to equality of Turing configurations.
Thus
\[F^k_N(E(C_0))=F^l_N(E(C_0))\quad\longleftrightarrow\quad T^k_N(C_0)=T^l_N(C_0).\]
For a deterministic self-map, the forward orbit is finite if and only if some pair of iterates repeats, which is equivalent to eventual periodicity.
\end{proof}

\begin{proof}[Proof of \Cref{und_fo}]
It suffices to establish the singleton case, and assume that an algorithm deciding orbit finiteness for finitely presented local polynomial maps and finitely supported initial points exists.

Given a pair $(M,w)$ of a Turing machine and its input word, let $C_{init}$ be its initial configuration.
Construct the finitely presented local polynomial map $F_M$ and the finitely supported point
\[x_{M,w}=E(C_{init}).\]
This construction is effective.

By \Cref{prop_ofrt},
\[\mathcal{O}_{F_M}(x_{M,w})\text{ is finite}\quad\longleftrightarrow\quad M(w)\text{ has an eventually periodic trajectory}.\]
Thus the assumed orbit-finiteness algorithm would contradicts \Cref{und_tep}.
\end{proof}

\bibliographystyle{amsplain}
\bibliography{references}

\end{document}